\documentclass[12pt]{article}

\pdfoutput=1 

\usepackage{eurosym}
\usepackage{amsthm,amssymb,amsmath}
\usepackage{epsfig,fge, mathtools}
\usepackage[dvipsnames]{xcolor}
\usepackage{pgf,tikz}
\usetikzlibrary{shapes.geometric}
\usepackage[latin1]{inputenc}
\usepackage[colorlinks=true, linkcolor=blue, citecolor=blue]{hyperref}
\usetikzlibrary{graphs}
\usepackage{mathdots}
\usepackage{graphicx}
\usepackage{epstopdf}
\usepackage{url}
\usepackage{cite}
\usetikzlibrary{decorations.markings}
\tikzstyle{vertex}=[circle, draw, inner sep=0pt, minimum size=6pt]

\newcommand{\decprob}[4]{
	\begin{center}
		\noindent\framebox{\begin{minipage}{#4\textwidth}
				#1\\
				\textbf{Instance:} #2\\ 
				\textbf{Question:} #3
		\end{minipage}}
	\end{center}
}

\newtheorem{them}{Theorem}

\newcounter{thmletter}

\newtheorem{definition}{Definition}
\newtheorem{theorem}{Theorem}
\newtheorem{corollary}{Corollary}
\newtheorem{lemma}{Lemma}
\newtheorem{remark}{Remark}
\newtheorem{proposition}{Proposition}
\newtheorem{observation}{Observation}

\tikzstyle{black_v}=[fill=black, draw=black, shape=circle, scale=0.85,minimum size=.5pt, very thin]
\tikzstyle{none}=[fill=none, draw=none, shape=circle, minimum size=1.5pt, very thin]

\begin{document}

\title{Total outer-independent coalition  in graphs}
\bigskip 
\author{Olga Buntina $^{1}$, Hamidreza Golmohammadi $^{2}$, \\ J. C. Valenzuela-Tripodoro$^3$}

\date{\today.}

\maketitle

\begin{center}
    $^{1}$ Novosibirsk State University, Pirogova Str. 2, Novosibirsk, Russia\\
    $^{2}$ Sobolev Institute of Mathematics, Ak. Koptyug av. 4, Novosibirsk, 630090, Russia\\
    $^{3}$ Universidad de C\'adiz, Algeciras Campus (Algeciras) 11202, Spain.
        \bigskip
    {\tt o.buntina@g.nsu.ru ~ h.golmohammadi@g.nsu.ru ~ jcarlos.valenzuela@uca.es } 
\end{center}

\def\nt{\noindent}

\maketitle

\begin{abstract}
A set $D$ of vertices graph $G$ is a total outer-independent dominating set (TOIDS) of $G$ 
if every vertex of $G$ is adjacent to at least one vertex in $D$,
and $V(G)\setminus D$ is an independent set of $G$. A TOI-coalition  in $G$ comprises two 
disjoint sets of vertices  $A$ and $B$ of $G$, neither of which is a TOIDS but whose union 
$A\cup B$ is a TOIDS of $G$. We say that the sets $A$ and $B$ form a TOI-coalition, and are 
TOI-coalition partners. A TOI-coalition  partition in $G$ is a vertex partition 
$\Psi =\{V_1, V_2,..., V_k\}$ in which every set forms a TOI-coalition with another set 
in $\Psi$. The TOI-coalition number $C_t^{oi}(G)$ is the maximum cardinality among all 
TOI-coalition partitions of $G$. In this work, the above-mentioned concepts are introduced 
and studied.  The existence of a TOI-coalition partition is investigated. Several sharp upper 
bounds on $C_t^{oi}(G)$ are established. Finally, the exact values of $C_t^{oi}(G)$ for some 
graph classes are obtained.
\end{abstract}

\noindent{\bf Keywords:}  
\medskip
\noindent{\bf AMS Subj.\ Class.:}  05C69

\section{Introduction and preliminaries }

Coalition in graphs is a new graph invariant, which was introduced by Haynes et al. \cite{A11}, 
and several papers have been devoted to this concept. The concept of coalition in graphs arises from
domination in graphs; it is not surprising that many variants of coalition have been
studied in relation to other domination invariants. For example, total
coalition, independent coalition, connected coalition, and total restrained coalition correspond
to total, independent, connected, and total restrained domination in graphs, respectively; 
see \cite{A1,A2,A3,Che2026, AA2}\medskip

Throughout this article, we only consider undirected simple graphs with vertex set $V(G)$ and
edge set $E(G)$. Given a vertex $v_i\in V(G)$, we denote by $N_{G}(v_i)$ and $N_{G}[v_i]$ the 
open neighborhood and the closed neighborhood of $v_i$ in $G$, respectively. The degree
of a vertex $u\in V(G)$ is $deg(u)=|N(u)|$. The minimum degree of a vertex in a graph $G$ is 
denoted $\delta(G)$, while the maximum degree of a vertex is denoted by $\Delta(G)$. The 
independence number of $G$, denoted $\alpha(G)$, is the maximum size of an independent set in $G$. The minimum number of vertices required to cover all edges of $G$ is called the vertex cover number and it is denoted by $\tau(G)$.
For graph theory notation and terminology, we generally follow \cite{W}.\medskip

A set of vertices in a graph $G$ is a dominating set if every vertex in $v\in V(G)\setminus S$ 
is adjacent to at least one vertex in $S$. Additionally, if every vertex in $G$ is adjacent to at 
least one vertex in $S$, then $S$ is a total dominating set (TDS). Domination and its variations 
in graphs have been thoroughly explored in the literature. For an in-depth introduction to domination 
in graphs, we refer to the book \cite{A15}. A total dominating set $S$ in $G$ is said to be a total 
outer-independent dominating set (TOIDS) if $V(G)\setminus S$ is independent. The minimum cardinality 
of a TOIDS in $G$ is denoted by $\gamma_t^{oi}(G)$. The concept of total outer-independent domination 
in graphs was initiated in \cite{TOID-upperbound}, and has subsequently been studied in 
\cite{A5, Cabrera-TCID-regular, Cabrera-TOID-2026, TOID-lowerbound, A17}. Substantial efforts have 
been directed towards partitioning the vertex set of a graph $G$ into subsets that possess a specific 
characteristic. A vertex partition of a graph $G$ into dominating sets is called the domatic partition 
of $G$. The domatic number of $G$, denoted by $d(G)$, is the maximum cardinality of a domatic partition 
of $G$. A survey on domination partitions in graphs can be found in \cite[Chapter 12]{A15}. Two 
disjoint sets $A, B \subseteq V(G)$ form a coalition if neither set is a dominating set in $G$ but 
their union is a dominating set. A coalition partition $\Psi$ of $G$ is a partition of $V(G)$ in which 
every set is either a singleton dominating set or forms a coalition with another set in $\Psi$. The 
coalition number $C(G)$ is the maximum cardinality taken over all coalition partitions of $G$. Given 
a coalition partition $\Psi=\{V_1,V_2 \ldots V_k\}$ of a graph $G$, the coalition graph $CG(G,\Psi)$ 
is the graph whose vertices correspond one-to-one with the sets of $\Psi$, and two vertices $V_i$ and 
$V_j$ are adjacent in $CG(G,\Psi)$ if and only if are coalition partners in $\Psi$. Coalitions in graphs, 
coalition partitions, and coalition graphs have been studied, for example, in \cite{A4,AA1,A12,A13,A14}.

In this paper, we study the properties of the TOI-coalition partition of a graph $G$. In Section 1, we 
establish notation and provide fundamental definitions employed in the paper. In Section 2, we investigate 
the existence of a TOI-coalition partition. In Section 3, we present upper bounds on the TOI-coalition 
number. In Section 4, we determine the exact value of $C_t^{oi}(G)$ for some well-known graphs $G$. Finally, we study the computational complexity of the decision problem of total outer-inde\-pen\-dent  coalition.

\begin{definition}[TOI-domatic partition]
\emph{A} total outer-independent domatic partition \emph{is a partition of $V(G)$ into TOIDS.} 
\emph{The} total outer-independent domatic number \emph{of $G$, denoted $d_t^{oi}(G)$, equals the 
maximum order of a total outer-independent domatic partition of $G$. A total outer-independent
domatic partition of cardinality $d_t^{oi}$ is called a $d_t^{oi}$-partition. }
\end{definition}

\begin{definition}[TOI-coalition]
\emph {Two nonempty disjoint vertex sets $X,Y\subseteq V(G)$} form a TOI-coalition , \emph{abbreviated} 
a  TOI-coalition, \emph{if neither $X$ nor $Y$ is a TOIDS of $G$, but $X\cup Y$ is a TOIDS of $G$.}
\end{definition}

\begin{definition}[TOI-coalition partition]
\emph{A partition $\Psi =\{V_1, V_2,..., V_k\}$ of the vertex set of a graph $G$ is called} a 
TOI-coalition partition \emph{if for every set $V_i\in\Psi$ there exists a set $V_j\in\Psi$, $j\neq i$, 
such that $V_i$ and $V_j$ form a TOI-coalition.} \emph{The} TOI-coalition number of $G$, \emph{denoted b
y $C_t^{oi}(G)$, equals the maximum order $k$ of a TOI-coalition partition of $G$.}
\end{definition}

\begin{definition}[TOI-coalition graph]
\emph{Associated with any TOI-coalition partition $\Psi=\{V_1,V_2,\dots,V_k\}$ in a graph $G$}, 
\emph{the} TOI-coalition graph $C_t^{oi}(G,\Psi)$ \emph{has the set of vertices $\Psi$, in which 
two vertices $V_i$ and $V_j$ are adjacent if they form a TOI-coalition in $G$}.
\end{definition}

\begin{remark}
Since a single vertex in a graph $G$ cannot be totally dominated, it follows that a singleton 
set is not a TOIDS in any TOI-coalition partition.
\end{remark}

\section{Existence conditions}

We begin with the existence problem for TOI-coalition partitions.

\begin{proposition}\label{Existence}
Any graph $G$ of minimum degree at least one has a TOI-coalition partition.
\end{proposition}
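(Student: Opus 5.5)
We split according to whether $G$ has a vertex of degree one, and in each case we write down an explicit partition with two or three parts. Two facts from the definitions are used throughout. First, since $\delta(G)\ge 1$, the whole set $V(G)$ is a TOIDS: every vertex has a neighbour, and $V(G)\setminus V(G)=\emptyset$ is independent. Second, by the Remark, no singleton is a TOIDS.

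\emph{Case 1: $G$ has a leaf $u$.} Let $v$ be its unique neighbour, and take $\Psi=\{\{v\},V(G)\setminus\{v\}\}$. The set $\{v\}$ is not a TOIDS by the Remark. The set $V(G)\setminus\{v\}$ is not a TOIDS either, because $N(u)=\{v\}$ and so $u$ has no neighbour in it. The union of the two parts is $V(G)$, which is a TOIDS. Hence $\Psi$ is a TOI-coalition partition.

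\emph{Case 2: $\delta(G)\ge 2$.} Since $V(G)$ is a TOIDS, we may choose an inclusion-minimal TOIDS $D$. Every TOIDS has at least two vertices, so $|D|\ge 2$. Fix $x\in D$ and consider
\[
\Psi=\{D\setminus\{x\},\ \{x\},\ V(G)\setminus D\},
\]
where the last part is omitted if $D=V(G)$. We check that no part is a TOIDS.
\begin{itemize}
\item $D\setminus\{x\}$ is nonempty and, by minimality of $D$, is not a TOIDS.
\item $\{x\}$ is not a TOIDS by the Remark.
\item $V(G)\setminus D$ is independent, because $D$ is a TOIDS. If it is nonempty, none of its vertices has a neighbour inside it, so it is not total dominating.
\end{itemize}

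We now exhibit a partner for each part. The sets $D\setminus\{x\}$ and $\{x\}$ are partners, since their union is $D$. If $V(G)\setminus D\neq\emptyset$, partner it with $D\setminus\{x\}$; their union is $V(G)\setminus\{x\}$. Its complement $\{x\}$ is trivially independent. Moreover, every vertex has degree at least $2$, so it has a neighbour different from $x$, and hence $V(G)\setminus\{x\}$ is total dominating. Thus $V(G)\setminus\{x\}$ is a TOIDS, and $\Psi$ is a TOI-coalition partition.

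The main obstacle is finding a partner for the leftover part $V(G)\setminus D$, which is exactly where the no-leaf assumption $\delta(G)\ge 2$ is used. Leaves are precisely the obstruction to $V(G)\setminus\{x\}$ being a TOIDS, which is why they are handled separately in Case~1.
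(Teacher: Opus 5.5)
Your proof is correct, but it takes a different route from the paper's. The paper starts from a $d_t^{oi}(G)$-partition and shrinks its parts to minimal TOIDS. It splits each minimal TOIDS into two arbitrary halves, which are partners by minimality. It then handles the leftover set $R$ by a uniform dichotomy. Either $R$ already forms a TOI-coalition with one of the halves $D_1,D_2$ of the last minimal TOIDS, giving $\{D_1,D_2,R\}$, or it does not. In the latter case $D_1\cup R$ cannot be a TOIDS (else $D_1$ and $R$ would be partners), so $\{D_1\cup R,D_2\}$ works. In fact Observation~\ref{ob1} shows that two disjoint TOIDS never coexist, so $d_t^{oi}(G)=1$. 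The paper's argument therefore effectively works with a single minimal TOIDS, as your Case~2 does.

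The advantage of the paper's dichotomy is that it never has to exhibit a particular union that is a TOIDS, so it needs no case distinction on degrees. You instead split on whether $G$ has a leaf and write down explicit partitions. The ingredient that replaces the dichotomy is your observation that $V(G)\setminus\{x\}$ is a TOIDS whenever $\delta(G)\ge 2$.

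Your route gives more than existence. When $\delta(G)\ge 2$, $V(G)\setminus\{x\}$ is a TOIDS for every $x$, so a minimal TOIDS $D$ can never equal $V(G)$. Hence your proviso ``omitted if $D=V(G)$'' is vacuous, the third part is always present, and you get $C_t^{oi}(G)\ge 3$ for every graph with $\delta(G)\ge 2$.
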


\begin{proof}
Let $\Psi=\{V_{1},V_2\ldots,V_k\}$ be a $d_t^{oi}(G)$-partition, and so $k=d_t^{oi}(G)$. We may assume 
that $V_{1},\ldots,V_{k-1}$ are minimal total outer-independent dominating sets. Otherwise, we substitute 
them with minimal total outer-independent dominating sets $V'_{1}\subseteq V_{1},\ldots, V_{k-1}'\subseteq 
V_{k-1}$ respectively, and substitute $V_{k}$ with $V_{k} \cup\big{(}\cup_{i=1}^{k-1}(V_{i}\setminus 
V_{i}')\big{)}$. Let $\{V_{i,1},V_{i,2}\}$ be any partition of $V_{i}$ for each $i\in[k-1]$. By the 
minimality of the TOIDS $V_i$, we note that neither set $V_{i,1}$ nor $V_{i,2}$ is a TOIDS of $G$. 
Nevertheless the set $V_i= V_{i,1} \cup V_{i,2}$ is a TOIDS of $G$.  Hence, we deduce that the sets 
$V_{i,1}$ and $V_{i,2}$ form a TOI-coalition  in $G$. If $V_{k}$ is a minimal TOIDS, then $\Psi'=
\{V_{i,1},V_{i,2}\}_{i=1}^{k}$ is a TOI-coalition  partition in $G$, in which $\{V_{k,1},V_{k,2}\}$ is 
any partition of $V_{k}$. Otherwise, we  substitute $V_{k}$ with a minimal TOIDS  $V_{k}'\subseteq V_{k}$ 
and set $V_{k}''=V_{k}\setminus V_{k}'$. Note that $V_{k}''$ is not a TOIDS  in $G$ as $\Psi$ is a 
$d_t^{oi}(G)$-partition. Let $\{V_{k,1}',V_{k,2}'\}$ be any partition of $V_{k}'$. If $V_{k}''$ forms 
a TOI-coalition  with $V_{k,1}'$ or $V_{k,2}'$, then $\{V_{i,1},V_{i,2}\}_{i=1}^{k-1}\cup 
\{V_{k,1}',V_{k,2}',V_{k}''\}$ would be a TOI-coalition partition. Thus, we suppose that neither 
$V_{k,1}'$ nor $V_{k,2}'$ forms a TOI-coalition  with $V_{k}''$. In such a situation, $V_{k,1}'\cup 
V_{k}''$ is a TOI-coalition partner of $V_{k,2}'$. Consequently, $\{V_{i,1},V_{i,2}\}_{i=1}^{k-1}\cup 
\{V_{k,1}'\cup V_{k}'',V_{k,2}'\}$ is a target partition.
\end{proof}

As an immediate result of Proposition \ref{Existence}, we have the following corollary.

\begin{corollary}
If a graph $G$ contains an isolated vertex, then $C_t^{oi}(G)=0$.
\end{corollary}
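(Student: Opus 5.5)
The plan is to show that a graph with an isolated vertex admits no TOI-coalition partition at all. Then the maximum in the definition of $C_t^{oi}(G)$ is over an empty family, and by the natural convention we set $C_t^{oi}(G)=0$. The corollary is really the converse side of Proposition~\ref{Existence}: that result guarantees existence when $\delta(G)\ge 1$, and here we show existence fails when $\delta(G)=0$.

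The key step is to observe that $G$ has no TOIDS at all. Let $v$ be an isolated vertex of $G$. A TOIDS $D$ must in particular be a total dominating set, so every vertex of $G$, including $v$, must have a neighbour in $D$. Since $N_G(v)=\emptyset$, no subset $D\subseteq V(G)$ totally dominates $v$. Hence no subset of $V(G)$ is a TOIDS of $G$.

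Next we deduce that no two disjoint sets $X,Y$ form a TOI-coalition. The definition requires $X\cup Y$ to be a TOIDS, which is impossible by the previous step. Now suppose $\Psi=\{V_1,\dots,V_k\}$ were a TOI-coalition partition. Since $V(G)\neq\emptyset$, we have $k\ge 1$, so some set $V_1\in\Psi$ must have a coalition partner. This is again impossible, so $G$ has no TOI-coalition partition.

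The only delicate point is the convention: the definition takes a maximum over TOI-coalition partitions, and here there are none. I would state explicitly that $C_t^{oi}(G)$ is defined to be $0$ in this case, consistent with the paper's phrasing. Everything else is immediate, so I do not expect a real obstacle.
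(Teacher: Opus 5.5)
Your proof is correct, but it does not follow the paper's indicated route. The paper gives no argument: it calls the corollary an ``immediate result'' of Proposition~\ref{Existence}. That proposition only shows that $\delta(G)\ge 1$ is \emph{sufficient} for a TOI-coalition partition to exist. It says nothing about graphs with $\delta(G)=0$, so the corollary does not follow from it. What is needed is the converse direction, as you note.

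Your direct argument supplies exactly that missing step:
\begin{itemize}
\item an isolated vertex $v$ has $N(v)=\emptyset$, so no subset of $V(G)$ totally dominates $v$;
\item hence $G$ has no TOIDS at all, and no two disjoint sets can form a TOI-coalition;
\item therefore no partition, of any order $k\ge 1$, is a TOI-coalition partition.
\end{itemize}
The only real link to Proposition~\ref{Existence} is this: its proof starts from a $d_t^{oi}(G)$-partition, which presupposes that $V(G)$ itself is a TOIDS, i.e.\ $\delta(G)\ge 1$. Your observation shows that this hypothesis is also necessary. So your self-contained argument gives what the paper's one-line citation does not.

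You are also right to state the convention explicitly. The paper defines $C_t^{oi}(G)$ as a maximum over TOI-coalition partitions and never says what happens when there are none. The value $0$ is therefore a convention, and your write-up should declare it.
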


\section{Upper bounds}

In this section, we establish upper bounds on the TOI-coalition number of $G$.

\begin{proposition} \label{P2}
Let $G$ be a graph of order $n$ without isolated vertices. Then $C_t^{oi}(G)\le n-\gamma_t^{oi}(G)+2$. 
Moreover, this bound is sharp. 
\end{proposition}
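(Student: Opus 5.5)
Let $\Psi=\{V_1,\dots,V_k\}$ be a TOI-coalition partition of $G$ with $k=C_t^{oi}(G)$; such a partition exists by Proposition~\ref{Existence}. The idea is that any coalition $V_i\cup V_j$ is a TOIDS, so $|V_i|+|V_j|\ge\gamma_t^{oi}(G)$. The remaining $k-2$ classes are nonempty, hence
\[
n=\sum_{l=1}^k |V_l| \ge |V_i|+|V_j| + (k-2) \ge \gamma_t^{oi}(G)+k-2,
\]
and therefore $k\le n-\gamma_t^{oi}(G)+2$.

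The steps, in order, are as follows. First, fix any $V_1\in\Psi$. By the definition of a TOI-coalition partition it has a partner $V_j$ with $j\ne 1$. Second, note that $V_1\cup V_j$ is a TOIDS, so its cardinality is at least $\gamma_t^{oi}(G)$. Third, use that the parts of a partition are nonempty, so each of the other $k-2$ parts contributes at least one vertex. Summing these gives the displayed inequality. Nothing further is needed; there is no need to choose the partner pair to be of minimum size.

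For sharpness I would exhibit a family where equality holds. The natural candidate is $K_n$ with $n\ge 3$. Every set of two vertices totally dominates $K_n$, and its complement is a clique of size $n-2$, which is independent only when $n-2\le 1$. So in fact $\gamma_t^{oi}(K_n)=n-1$, and the bound reads $C_t^{oi}(K_n)\le 3$. To attain it I would partition $V(K_n)$ into three sets $A,B,C$ with $|C|=1$ and $|A|,|B|\ge 1$, and with $|A|+|B|=n-1$. Then $A\cup B$ has size $n-1$, so it is a TOIDS. Also $A\cup C$ and $B\cup C$ are TOIDS exactly when they have size $n-1$. Taking $n=3$ with all three sets singletons, each pair of singletons has size $2=n-1$. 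No singleton is a TOIDS, by the Remark. So $\{\{v_1\},\{v_2\},\{v_3\}\}$ is a TOI-coalition partition of $K_3$, giving $C_t^{oi}(K_3)=3=n-\gamma_t^{oi}+2$. Alternatively, $P_2=K_2$ gives $\gamma_t^{oi}=2$ and the partition into two singletons, so $C_t^{oi}=2=n-\gamma_t^{oi}+2$.

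The only delicate part is the sharpness example. I would want an infinite family if the authors intend one. A candidate is a graph with $\gamma_t^{oi}=n-1$ or $n-2$ where all vertices but a few must be in every TOIDS. Otherwise I would simply present $K_2$ and $K_3$ as tight examples.
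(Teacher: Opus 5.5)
Your counting argument for the bound is the same as the paper's proof. Your tightness examples $K_2$ and $K_3$ are also valid; the paper instead points to the infinite family $K_{r,s}$ from Theorem~\ref{covering}, where $\gamma_t^{oi}(K_{r,s})=r+1$ and $C_t^{oi}(K_{r,s})=s+1$.

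Your $K_n$ attempt in fact already gives an infinite tight family for every $n\ge 3$. Take $|A|=n-2$ and let $B$, $C$ be singletons. Each part needs only \emph{some} partner, so $B$ and $C$ can both pair with $A$, and $B\cup C$ does not have to be a TOIDS.
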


\begin{proof}
Let $\Psi=\{V_1,V_2,\dots,V_k\}$ be a maximum TOI-coalition partition of $G$, where $k=C_t^{oi}(G)$. 
By definition, every set has a TOI-coalition partner. Without loss of generality, suppose that
$V_1$ and $V_2$ form a TOI-coalition. Then $V_1\cup V_2$ is a TOIDS, and therefore $\gamma_t^{oi}(G)
\le |V_1|+|V_2|$. Since the remaining sets are nonempty, it follows that $n=|V_1|+|V_2|+\cdots+|V_k| 
\ge \gamma_t^{oi}(G)+(k-2)$. Hence $k\le n-\gamma_t^{oi}(G)+2$. Since $k=C_t^{oi}(G)$, this completes 
the proof. The sharpness of this upper bound is illustrated in Theorem \ref{covering}. \end{proof}

The next result provides us with an upper bounds on $C_t^{oi}(G)$  in terms of the independence number of $G$.

\begin{proposition}\label{prop:upperalpha}
Let $G$ be a graph of order $n \geq 3$. Then  $C_{t}^{oi}(G) \leq \alpha(G) + 2.$
\end{proposition}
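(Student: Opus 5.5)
The plan is to argue exactly as in the proof of Proposition~\ref{P2}, but to use the outer-independence condition in place of the total domination condition. First I dispose of the degenerate case. If $G$ has an isolated vertex, then no set is a TOIDS, so $C_t^{oi}(G)=0$ by the corollary following Proposition~\ref{Existence}. The bound then holds trivially, since $\alpha(G)\ge 1$. So I may assume $\delta(G)\ge 1$, in which case a TOI-coalition partition exists by Proposition~\ref{Existence}.

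Let $\Psi=\{V_1,\dots,V_k\}$ be a TOI-coalition partition with $k=C_t^{oi}(G)$. Every set has a partner, so without loss of generality $V_1$ and $V_2$ form a TOI-coalition. Then $V_1\cup V_2$ is a TOIDS, and by the definition of a TOIDS the set $I=V(G)\setminus(V_1\cup V_2)=V_3\cup\cdots\cup V_k$ is independent in $G$. The sets of a partition are nonempty, so I choose one vertex $v_i\in V_i$ for each $i\in\{3,\dots,k\}$. These vertices are pairwise distinct because the $V_i$ are disjoint. They all lie in $I$, so $\{v_3,\dots,v_k\}$ is an independent set of size $k-2$. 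Hence $k-2\le |I|\le\alpha(G)$, which gives $C_t^{oi}(G)=k\le\alpha(G)+2$.

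No step here is a real obstacle. The only points that need care are the isolated-vertex case, handled by the corollary, and the nonemptiness of the parts in a partition, which is what turns $I$ into an independent set with at least $k-2$ vertices. The hypothesis $n\ge 3$ does not seem necessary for this argument.
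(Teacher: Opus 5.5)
Your proof is correct and takes the same route as the paper. You fix a coalition pair, observe that the complement of their union is independent, and pick one vertex from each of the remaining $k-2$ nonempty parts to get an independent set of size $k-2$; your separate handling of the isolated-vertex case and your remark that $n\ge 3$ is never used are harmless additions the paper leaves implicit.
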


\begin{proof}
Let $\Psi = \{V_1, V_2, \ldots, V_k\}$ be a total outer-independent coalition
partition of $G$ with $|\Psi| = C_{toi}(G)$. By definition, for $V_1 \in \Psi$
there exists $V_j \in \Psi$, $V_j \neq V_1$, such that $V_1 \cup V_j$ is a
total outer-independent dominating set of $G$. In particular, the set
$V(G) \setminus (V_1 \cup V_j)$ is an independent set in $G$. Since $\Psi$ is a partition of $V(G)$, the remaining sets in $\Psi \setminus \{V_1, V_j\}$ are  pairwise disjoint nonempty subsets of $V(G) \setminus (V_1 \cup V_j)$. Since every subset of 
an independent set is independent, by selecting one vertex for each $V_i,$ for $i \in \{2,3, \ldots, k\}
\setminus\{j\}$, we obtain an independent set $\{v_i\} \subseteq V(G) \setminus (V_1 \cup V_j)$
of cardinality $k - 2$. Therefore, $k - 2 \leq \alpha(G),$
and the result follows. The sharpness of the bound is presented in Theorem \ref{covering}.
\end{proof}

Since $n = \tau(G) + \alpha(G)$ for any graph of order $n$, the following result is a straightforward 
consequence of Proposition \ref{prop:upperalpha}

\begin{corollary}\label{cor:uppertau}
Let $G$ be a graph of order $n \geq 3$. Then   $C_{t}^{oi}(G) \leq n-\tau(G) + 2.$
\end{corollary}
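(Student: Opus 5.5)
The corollary follows from Proposition~\ref{prop:upperalpha} by rewriting $\alpha(G)$ in terms of $\tau(G)$. The plan is first to justify the identity $n=\tau(G)+\alpha(G)$ (Gallai's identity) and then to substitute it into the bound $C_t^{oi}(G)\le \alpha(G)+2$.

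For the identity, I would use the standard complementation argument. A set $S\subseteq V(G)$ is independent if and only if $V(G)\setminus S$ is a vertex cover. Indeed, if $S$ is independent, no edge has both ends in $S$, so every edge meets $V(G)\setminus S$. Conversely, if $V(G)\setminus S$ covers every edge, no edge can lie inside $S$.

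Complementation is therefore a bijection between independent sets and vertex covers. Taking a maximum independent set gives $\tau(G)\le n-\alpha(G)$, and taking a minimum vertex cover gives $\alpha(G)\ge n-\tau(G)$. Together these yield $\alpha(G)=n-\tau(G)$.

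Substituting into Proposition~\ref{prop:upperalpha}, which holds for every graph of order $n\ge 3$, gives $C_t^{oi}(G)\le \alpha(G)+2=n-\tau(G)+2$. This completes the argument.

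There is no real obstacle. The only subtlety is that the hypotheses match those of Proposition~\ref{prop:upperalpha}, namely order $n\ge 3$. If $G$ has an isolated vertex, then $C_t^{oi}(G)=0$ by the corollary following Proposition~\ref{Existence}, and the bound holds trivially.
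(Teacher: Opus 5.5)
Your proposal is correct and follows the paper's route: the paper also obtains the corollary by substituting the identity $n=\tau(G)+\alpha(G)$ into Proposition~\ref{prop:upperalpha}, except that it quotes this identity rather than proving it by complementation as you do. Your aside about isolated vertices is harmless but unnecessary, since Proposition~\ref{prop:upperalpha} already covers that case.
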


\begin{remark}
Let $\Psi$ be a TOI-coalition partition of a graph $G$, and let $X\in \Psi$. We denote the number 
of sets of $\Psi$ that form a TOI-coalition with $X$ by $C_{\Psi}(X)$. In the next result, we provide 
an upper bound on $C_{\Psi}(X)$.
\end{remark}

\begin{theorem} \label{t1}
Let $G$ be a graph with maximum degree $\Delta(G)$, and let $\Psi$ be a TOI-coalition partition of 
$G$. Then for each set $X\in \Psi$, $C_{\Psi}(X)\le \Delta(G)$.
\end{theorem}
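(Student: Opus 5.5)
The plan is to exploit the single reason why $X$ fails to be a TOIDS and to count how many pairwise disjoint sets of $\Psi$ can "repair" that defect. Since the partners of $X$ are distinct members of the partition $\Psi$, they are pairwise disjoint and disjoint from $X$. So it suffices to find a set of at most $\Delta(G)$ vertices, outside $X$, that every partner of $X$ must intersect.

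\textbf{Setup.} We may assume $G$ has no isolated vertices. Otherwise, by the Corollary to Proposition~\ref{Existence}, no TOI-coalition partition exists: an isolated vertex cannot be totally dominated by any set. Hence $\Delta(G)\ge 1$. Fix $X\in\Psi$. By definition $X$ is not a TOIDS, so one of two things happens: either some vertex $v$ has no neighbor in $X$, or $X$ is a total dominating set but $V(G)\setminus X$ contains an edge $uv$.

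\textbf{Case 1: some vertex $v$ satisfies $N(v)\cap X=\emptyset$.} Let $Y$ be any partner of $X$. Then $X\cup Y$ totally dominates $v$, so $Y\cap N(v)\neq\emptyset$. The partners of $X$ are pairwise disjoint, and each contains a distinct vertex of $N(v)$. Hence $C_\Psi(X)\le |N(v)|=\deg(v)\le\Delta(G)$.

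\textbf{Case 2: $X$ is a total dominating set and $u,v\notin X$ with $uv\in E(G)$.} Let $Y$ be any partner of $X$. Then $V(G)\setminus(X\cup Y)$ is independent, so $Y$ must contain $u$ or $v$. By disjointness of the partners, $C_\Psi(X)\le 2$. It remains to show $\Delta(G)\ge 2$ in this case, which is the only delicate point. Suppose instead $\Delta(G)=1$. Then $G$ is a perfect matching, and each vertex has a unique neighbor. Since $X$ totally dominates $G$, every vertex has its unique neighbor in $X$, so $X=V(G)$. This contradicts the existence of the edge $uv$ outside $X$. Therefore $\Delta(G)\ge 2\ge C_\Psi(X)$.

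Combining the two cases gives $C_\Psi(X)\le\Delta(G)$. I expect the only potential obstacle to be the small-degree check in Case 2, which the matching argument above resolves.
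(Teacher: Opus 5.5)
Your proof is correct and follows the paper's argument: the same two cases (partners must meet $N(v)$ when $X$ is not totally dominating, and must meet $\{u,v\}$ when $X$ is totally dominating but $V(G)\setminus X$ contains an edge), with disjointness of the partners giving the count. Your perfect-matching argument for $\Delta(G)\ge 2$ in Case 2 fills a genuine gap, since the paper simply asserts $\Delta(G)\ge 2$ there without justification.
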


\begin{proof}
Since $X\in \Psi$, the set $X$ is not a TOIDS. Two cases are possible.

\smallskip
\noindent
\textbf{Case 1.} $X$ is not a total dominating set. Then there exists a vertex $v\in V(G)$ such 
that $N(v)\cap X=\varnothing$. Let $Y\in \Psi$ be a TOI-coalition partner of $X$. Since $X\cup Y$ 
is a TOIDS, the vertex $v$ must have a neighbor in $X\cup Y$. Since $v$ has no neighbor in $X$, it 
follows that $N(v)\cap Y\neq\varnothing$. Thus every TOI-coalition partner of $X$ contains a vertex 
of $N(v)$. Since distinct partners are pairwise disjoint, there is a one-to-one correspondence between 
each partner and a distinct vertex of N(v). Therefore, $C_{\Psi}(X)\le |N(v)|=\deg(v)\le \Delta(G)$.

\smallskip
\noindent
\textbf{Case 2.} $X$ is a total dominating but not outer-independent. Since $V(G)\setminus X$ is not 
independent, in this case there exist adjacent vertices $u,v\in V(G)\setminus X$. Let $Y\in \Psi$ be 
a TOI-coalition partner of $X$. Then $X\cup Y$ is a TOIDS, implying that its complement
must be independent. Therefore, at least one of $u$ or $v$ must belong to $Y$, that is, $Y\cap 
\{u,v\}\neq\varnothing$. Thus, each TOI-coalition partner of $X$ includes at least one of the vertices 
$u$ or $v$. Since distinct partners are disjoint, there are at most two such partners. In this scenario, 
since $\Delta(G)\ge 2$, it follows that $C_{\Psi}(X)\le \Delta(G)$ once again.

\end{proof}

The following lemma will turn out to be useful in establishing the upper bound for the TOI-coalition 
number.

\begin{lemma} \label{l1}
Let $\Psi$ be a TOI-coalition partition of a graph $G$, let $X\in \Psi$, and let $Y\in \Psi$ be any
coalition partner of $X$. Then $|X\cup Y|\ge |V(G)|-\alpha(G)$. In particular, $|Y|\ge |V(G)|-\alpha(G)
-|X|$.
\end{lemma}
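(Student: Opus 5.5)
Since $Y$ is a TOI-coalition partner of $X$, the set $X\cup Y$ is a TOIDS of $G$. The only property of a TOIDS we need here is the outer-independence part: the complement $V(G)\setminus (X\cup Y)$ is an independent set of $G$. The plan is to compare its size with $\alpha(G)$, exactly as in the argument for Proposition~\ref{prop:upperalpha}.

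First, since $V(G)\setminus(X\cup Y)$ is independent, the definition of the independence number gives
\[
|V(G)| - |X\cup Y| = |V(G)\setminus (X\cup Y)| \le \alpha(G),
\]
which rearranges to $|X\cup Y|\ge |V(G)|-\alpha(G)$. This is the first claim. Second, since $\Psi$ is a partition, $X$ and $Y$ are disjoint, so $|X\cup Y|=|X|+|Y|$. Substituting into the first inequality and subtracting $|X|$ yields $|Y|\ge |V(G)|-\alpha(G)-|X|$.

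There is no real obstacle. One could also phrase the first claim as saying that $X\cup Y$ is a vertex cover, so $|X\cup Y|\ge\tau(G)=|V(G)|-\alpha(G)$, in line with Corollary~\ref{cor:uppertau}. The direct independence argument above is already enough, and total domination is not used at all.
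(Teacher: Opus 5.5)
Your proof is correct and is essentially the paper's argument: outer-independence of $X\cup Y$ gives $|V(G)\setminus(X\cup Y)|\le\alpha(G)$, and the disjointness of $X$ and $Y$ then gives the second inequality. Your alternative phrasing via $X\cup Y$ being a vertex cover, so that $|X\cup Y|\ge\tau(G)=|V(G)|-\alpha(G)$, is also valid but adds nothing essential.
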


\begin{proof}
Since $X\cup Y$ is a TOIDS, the set $V(G)\setminus (X\cup Y)$ is independent. Hence $|V(G)\setminus 
(X\cup Y)|\le \alpha(G)$, implying that $|X\cup Y|\ge |V(G)|-\alpha(G)$. As $X\cap Y=\varnothing$, 
the second inequality follows immediately.
\end{proof}

\begin{proposition}\label{P1}
Let $\Psi$ be a TOI-coalition partition of a graph $G$, let $X\in \Psi$, and let $|X|<|V(G)|-\alpha(G)$. 
Then $C_{\Psi}(X)\le \left\lfloor \frac{|V(G)|-|X|}{\,|V(G)|-\alpha(G)-|X|\,}\right\rfloor$.
\end{proposition}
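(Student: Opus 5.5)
The plan is a counting argument built on Lemma~\ref{l1}. Fix $X\in\Psi$ with $|X|<|V(G)|-\alpha(G)$, and let $Y_1,\dots,Y_m$ be the TOI-coalition partners of $X$ in $\Psi$, so that $m=C_{\Psi}(X)$. If $m=0$ the bound holds trivially, because the right-hand side is a nonnegative integer. So assume $m\ge 1$.

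First, apply Lemma~\ref{l1} to each pair $(X,Y_i)$. This gives
\[
|Y_i|\ge |V(G)|-\alpha(G)-|X|
\]
for every $i$. The hypothesis $|X|<|V(G)|-\alpha(G)$ makes this lower bound a positive integer, so we may divide by it later.

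Second, use the fact that $\Psi$ is a partition. The sets $Y_1,\dots,Y_m$ are pairwise disjoint, and each is disjoint from $X$, since partners are distinct members of $\Psi$. Hence $Y_1\cup\cdots\cup Y_m\subseteq V(G)\setminus X$. Summing the bounds from the first step gives
\[
m\bigl(|V(G)|-\alpha(G)-|X|\bigr)\le \sum_{i=1}^m |Y_i|\le |V(G)|-|X|.
\]

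Third, divide by the positive quantity $|V(G)|-\alpha(G)-|X|$. Since $m$ is an integer, we may take the floor, which yields the stated inequality.

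There is no real obstacle. The only points needing care are recording why the denominator is positive (this is exactly the hypothesis of the proposition) and noting that the partners are disjoint from $X$ and from one another, which is what justifies the sum bound. That disjointness comes directly from $\Psi$ being a partition.
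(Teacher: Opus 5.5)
Your proposal is correct and follows the paper's proof essentially verbatim: apply Lemma~\ref{l1} to each partner $Y_i$, use that the partners are pairwise disjoint subsets of $V(G)\setminus X$ to get $m\bigl(|V(G)|-\alpha(G)-|X|\bigr)\le |V(G)|-|X|$, then divide and take the floor. You also make explicit that the denominator is positive by hypothesis and dispose of the case $m=0$ separately; the paper leaves both implicit, and the division argument covers $m=0$ anyway, so this is harmless.
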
 

\begin{proof} Let $Y_1,\dots,Y_m$ be all coalition partners of $X$, where $m=C_{\Psi}(X)$. By 
Lemma \ref{l1}, for every $i=1,\dots,m$, $|Y_i|\ge |V(G)|-\alpha(G)-|X|$. On the other hand, the 
sets $Y_1,\dots,Y_m$ are pairwise disjoint in $V(G)\setminus X$, so $|Y_1|+\cdots+|Y_m|\le |V(G)|-|X|$. 
Therefore $m\Bigl(|V(G)|-\alpha(G)-|X|\Bigr)\le |V(G)|-|X|$, yielding the desired result.
\end{proof}

By combining Proposition \ref{P1} and Theorem \ref{t1}, we infer the following result.

\begin{corollary} \label{C2}
Let $\Psi$ be a TOI-coalition partition of a graph $G$, and let $X\in \Psi$. If $|X|<|V(G)|-\alpha(G)$, 
then $C_{\Psi}(X)\le \min\left\{ \Delta(G), \left\lfloor \frac{|V(G)|-|X|}{\,|V(G)|-\alpha(G)-|X|\,}
\right\rfloor \right\}$.
\end{corollary}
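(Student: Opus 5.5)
The corollary follows by taking the minimum of two bounds that hold separately under its hypotheses. Fix a TOI-coalition partition $\Psi$ of $G$ and a set $X\in\Psi$ with $|X|<|V(G)|-\alpha(G)$.

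The first step is to invoke Theorem \ref{t1}. It needs no hypothesis on $|X|$ and gives $C_{\Psi}(X)\le \Delta(G)$. I will check that its proof is valid in the generality used here. In Case 1 the witness vertex $v$ has a neighbour in every partner of $X$, so the number of partners is at most $\deg(v)\le\Delta(G)$. In Case 2 there is an edge $uv$ outside $X$, so there are at most two partners, and $\Delta(G)\ge 1$ because $G$ has this edge. If $\Delta(G)=1$, I would refine the count: every vertex of $G$ outside $X\cup Y$ must be totally dominated, and two disjoint partners $Y_1,Y_2$ would each have to contain one of $u,v$. The simplest route is to accept Theorem \ref{t1} as stated, since the corollary is only being derived from earlier results.

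The second step is to invoke Proposition \ref{P1}. Its hypothesis $|X|<|V(G)|-\alpha(G)$ is exactly the hypothesis of the corollary. It ensures that the denominator $|V(G)|-\alpha(G)-|X|$ is a positive integer, so the floor expression is well defined. The proposition then gives
\[
C_{\Psi}(X)\le \left\lfloor \frac{|V(G)|-|X|}{|V(G)|-\alpha(G)-|X|}\right\rfloor .
\]

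Since $C_{\Psi}(X)$ is bounded above by each of the two quantities, it is bounded by their minimum, which is the stated inequality. The only delicate point is confirming that both earlier results apply to the same fixed $X$ and $\Psi$ without additional assumptions. This holds: Theorem \ref{t1} has no hypothesis on $|X|$, and Proposition \ref{P1} has exactly the one assumed in the corollary. I therefore expect no real obstacle beyond this bookkeeping.
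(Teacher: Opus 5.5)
Your proposal is correct and matches the paper, which obtains the corollary by combining Theorem \ref{t1} and Proposition \ref{P1} and taking the minimum of the two bounds. Your worry about $\Delta(G)=1$ in Case 2 of Theorem \ref{t1} is unfounded: in that case $X$ is a total dominating set, so $u$ has a neighbour in $X$ in addition to its neighbour $v\notin X$, which gives $\deg(u)\ge 2$ and hence $\Delta(G)\ge 2$.
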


\section{Exact values}

In this section, we determine exact values of $C_t^{oi}(G)$ for several specific families of simple
graphs $G$, including complete (bipartite) graphs, paths, and cycles. We first state the following 
observation.

\begin{observation}\label{ob1}
Let $D$ be a TOIDS of a graph $G$. Then no nonempty set $D_1\subseteq V(G)\setminus D$ is a TOIDS of $G$.
\end{observation}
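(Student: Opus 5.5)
The plan is a direct argument by contradiction that uses only the outer-independence half of the definition of a TOIDS. Suppose $D$ is a TOIDS of $G$ and $D_1\subseteq V(G)\setminus D$ is nonempty and also a TOIDS. We aim to show that some vertex of $D_1$ has no neighbour in $D_1$, which contradicts the fact that $D_1$ totally dominates $G$.

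The key steps, in order, are as follows.
\begin{enumerate}
\item Since $D$ is a TOIDS, the complement $V(G)\setminus D$ is independent. As $D_1\subseteq V(G)\setminus D$, the set $D_1$ is independent as well, because every subset of an independent set is independent.
\item Pick any $v\in D_1$, which is possible because $D_1\neq\varnothing$. Since $D_1$ is a total dominating set, $v$ has a neighbour $u\in D_1$.
\item Then $u$ and $v$ are adjacent vertices of the independent set $D_1$, a contradiction. Hence no nonempty $D_1\subseteq V(G)\setminus D$ is a TOIDS.
\end{enumerate}

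It is worth noting that the argument proves slightly more: a nonempty subset of $V(G)\setminus D$ cannot even be a total dominating set. So the outer-independence of $D_1$ itself is never used; only the outer-independence of $D$ and the total domination of $D_1$ matter.

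There is no substantial obstacle. The only points to check are that nonemptiness of $D_1$ is genuinely needed, since it supplies the vertex $v$ (the empty set is vacuously independent), and that total domination requires every vertex, including those inside $D_1$, to have a neighbour in $D_1$. That second requirement is exactly what clashes with independence.
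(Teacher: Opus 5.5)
Your proof is correct: the independence of $V(G)\setminus D$ passes to $D_1$, and this cannot coexist with $D_1$ totally dominating any one of its own vertices. The paper states this as an observation without proof, and your argument is the natural justification; your remark that $D_1$ cannot even be a total dominating set is all that is actually needed where the observation is used in Lemma~\ref{l2}.
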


We present next a key lemma, which is useful to derive further results.

\begin{lemma}\label{l2}
Let $\Psi$ be a maximum TOI-coalition partition of a graph $G$. Then there exists a set
$X\in \Psi$ such that $C_t^{oi}(G)=C_{\Psi}(X)+1$.
\end{lemma}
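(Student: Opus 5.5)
The plan is to argue by contradiction against the maximality of $\Psi$. Write $k=C_t^{oi}(G)$ and $\Psi=\{V_1,\dots,V_k\}$. Every set has at least one partner, so $C_{\Psi}(X)\le k-1$ holds trivially for every $X\in\Psi$. The statement is therefore equivalent to finding one set $X\in\Psi$ that forms a TOI-coalition with \emph{every} other member of $\Psi$. I choose $X$ to maximize $C_{\Psi}(X)$, fix one of its partners $Y$, and assume towards a contradiction that some $Z\in\Psi\setminus\{X\}$ is not a partner of $X$. The aim is then to rebuild $\Psi$ into a TOI-coalition partition with more than $k$ sets, or with the same number of sets but a set of larger coalition degree, contradicting the choice.

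The first step records what non-partnership means. By Observation~\ref{ob1}, no set of $\Psi$ other than those meeting $X\cup Y$ can be a TOIDS, since all such sets lie inside $V(G)\setminus(X\cup Y)$. So $X\cup Z$ fails to be a TOIDS for one of two reasons, and I treat them in the same two cases as Theorem~\ref{t1}.
\begin{itemize}
\item[(a)] Some vertex $v$ has $N(v)\cap(X\cup Z)=\varnothing$. Then every partner $W$ of $X$ meets $N(v)$.
\item[(b)] There are adjacent vertices $u,w\notin X\cup Z$. Then every partner of $X$ meets $\{u,w\}$.
\end{itemize}
The second step uses this structure. Every partner of $X$ contains a vertex from the small ``witness'' set, which is $N(v)$ in case (a) or $\{u,w\}$ in case (b). I would transfer such a witness vertex from a partner $W$ into $Z$ when $W\setminus\{\text{witness}\}$ still forms a coalition with $X$, so that $X\cup Z'$ becomes a TOIDS. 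Alternatively, I would merge $Z$ into a partner $Y$ and split off a fresh set elsewhere. Either move must keep every other set's partners intact and keep no set a TOIDS on its own, using the argument of Proposition~\ref{Existence}: subsets of minimal TOIDS are never TOIDS. The move should either increase $C_{\Psi}(X)$ while keeping $|\Psi|=k$, contradicting the maximal choice of $X$, or produce $k+1$ sets, contradicting $k=C_t^{oi}(G)$.

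The main obstacle is this exchange step. Moving a vertex may destroy a TOI-coalition between the donor set and some third set, or turn a set into a TOIDS. Since Observation~\ref{ob1} gives no control over sets that meet $X\cup Y$, the latter concern is genuine only for $X$ and $Y$ themselves. To handle the former, I would first try to choose $Y$ and the witness vertex so that the donor set is $Y$, with $Y$ chosen minimal among the partners of $X$. The verification then reduces to checking coalitions involving $Y$ alone, which can be patched by merging $Y$'s leftovers into $X$, mirroring the final step of the proof of Proposition~\ref{Existence}. If this local exchange cannot be made to work in general, I would fall back to proving the weaker conclusion that some maximum partition (rather than every one) has such an $X$, which is all the later exact-value computations need.
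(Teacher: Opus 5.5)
There is a genuine gap. Your argument depends on the exchange step, and you never carry it out. You also flag yourself that it may fail. Removing a witness vertex from a partner $W$ can destroy $W$'s coalition with $X$ or with some third set, and adding it to $Z$ can turn $Z$ into a TOIDS. The minimality trick from Proposition~\ref{Existence} does not obviously transfer, since the sets of an arbitrary maximum $\Psi$ carry no minimality structure. The fallback (``some maximum partition has such an $X$'') is likewise unproved. As written, nothing shows that $X$ is partnered with every other set.

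The missing idea is that no rebuilding is needed. Maximality of $\Psi$ matters only to identify $|\Psi|$ with $C_t^{oi}(G)$. You applied Observation~\ref{ob1} to single sets of $\Psi$, which are non-TOIDS by definition anyway. Apply it instead to unions of coalition pairs. If $A,B,C,D\in\Psi$ are distinct with $A\cup B$ and $C\cup D$ both TOIDS, these are two disjoint TOIDS. Observation~\ref{ob1} forbids this, because the complement of a TOIDS is independent and so contains no total dominating set. Hence any two edges of the coalition graph share an endpoint. Since it has no isolated vertices, it is a star or a triangle, and its center $X$ has $C_{\Psi}(X)=|\Psi|-1$. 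This is the paper's proof, and it holds for every TOI-coalition partition.

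In your own setup it finishes at once. A non-partner $Z$ of $X$ has some partner $W\neq X$, and $\{Z,W\}\cap\{X,Y\}\neq\varnothing$ forces $W=Y$. If $X$ had a partner $Y'\neq Y$, then $X,Y',Z,Y$ would be four distinct sets with $X\cup Y'$ and $Z\cup Y$ both TOIDS, which is impossible. So $C_{\Psi}(X)=1<2\le C_{\Psi}(Y)$, contradicting your choice of $X$ as maximizing $C_{\Psi}$.
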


\begin{proof}
Consider the TOI-coalition graph $H_\Psi$. Since every set in $\Psi$ has a coalition partner, $H_\Psi$ 
has no isolated vertices. We claim that any two edges of $H_\Psi$ share a common endpoint. Suppose, to 
the contrary, that there exist at least two edges in $H_\Psi$ that do not have a common endpoint. We 
may thus assume that there exist four pairwise distinct sets $A$, $B$, $C$ and $D\in \Psi$ such that 
$A$ and $B$ form a TOI-coalition, and $C$ forms a TOI-coalition with $D$. Then both $A\cup B$ and 
$C\cup D$ are TOIDS of $G$. Since the sets of $\Psi$ are pairwise disjoint, this implies that the sets 
$A\cup B$ and $C\cup D$ are disjoint, contradicting Observation \ref{ob1}. Thus any two edges of $H_\Psi$ 
have a nonempty intersection. Since $H_\Psi$ has no isolated vertices, $H_\Psi$ must be isomorphic to 
either a star graph or a triangle. In both scenarios, there is a vertex that is adjacent to all the others. 
Let this vertex be associated with the set $X \in \Psi$. Hence, $C_{\Psi}(X)=|\Psi|-1$. Since $\Psi$ is 
maximum, it follows that $|\Psi|=C_t^{oi}(G)$, and therefore $C_t^{oi}(G)=C_{\Psi}(X)+1$.

\end{proof}

\begin{theorem}\label{covering}
The following equalities hold for any $n\geq3$. 
\begin{enumerate}
\item[\emph{(}i\emph{)}] $C_t^{oi}(K_n)=3$.
\item[\emph{(}ii\emph{)}] $C_t^{oi}(K_{r,s})=s+1$, where $r\le s$.
\item[\emph{(}iii\emph{)}] $C_t^{oi}(P_n)=
\begin{cases}
3, & n=3,\\
2, & n=4,\\
3, & n\ge 5.
\end{cases}$
\item[\emph{(}iv\emph{)}] $C_t^{oi}(C_n)=3$.
\end{enumerate}
\end{theorem}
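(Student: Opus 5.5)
By Lemma \ref{l2}, every maximum TOI-coalition partition $\Psi$ has a set $X$ with $C_t^{oi}(G)=C_\Psi(X)+1$, and by Theorem \ref{t1} we have $C_\Psi(X)\le\Delta(G)$. So $C_t^{oi}(G)\le \Delta(G)+1$, and Proposition \ref{prop:upperalpha} adds $C_t^{oi}(G)\le\alpha(G)+2$. These give every upper bound except the one for $P_4$. For each lower bound I will write down an explicit partition.

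\emph{Complete graphs.} Here $\alpha(K_n)=1$, so $C_t^{oi}(K_n)\le 3$. A set $D$ is a TOIDS of $K_n$ exactly when $|V\setminus D|\le 1$; since $n\ge 3$, $|D|\ge 2$ then makes $D$ total dominating. Take $\{v_1\}$, $\{v_2\}$ and $V\setminus\{v_1,v_2\}$, which is nonempty because $n\ge3$. Each singleton together with the third set has $n-1$ vertices, so it is a TOIDS. None of the three sets is a TOIDS: the singletons fail total domination, and the third set leaves the edge $v_1v_2$ outside.

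\emph{Complete bipartite graphs.} Let $K_{r,s}$ have parts $R,S$ with $|R|=r\le s=|S|$. Then $\Delta=s$, so $C_t^{oi}\le s+1$. For the lower bound take $\Psi=\{R\}\cup\{\{x\}:x\in S\}$, which has $s+1$ sets. The set $R$ is not a TOIDS, since vertices of $R$ have no neighbour in $R$. The singletons are not TOIDS. For each $x\in S$, the set $R\cup\{x\}$ totally dominates, and its complement $S\setminus\{x\}$ is independent, so it is a TOIDS.

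\emph{Paths and cycles.} Here $\Delta=2$, so the value is at most $3$ in all cases. The construction I use is $\Psi=\{V\setminus\{u,w\},\{u\},\{w\}\}$ with $uw$ an edge. The set $X=V\setminus\{u,w\}$ is not a TOIDS because its complement contains the edge $uw$. The set $X\cup\{u\}$ has complement $\{w\}$, which is trivially independent. It is total dominating unless some vertex has $w$ as its only neighbour. So $X\cup\{u\}$ is a TOIDS precisely when $w$ is not a support vertex, and symmetrically for $X\cup\{w\}$.
\begin{itemize}
\item For $C_n$ with $n\ge4$, any edge works since there are no leaves; $C_3=K_3$ is covered by (i).
\item For $P_n$ with $n\ge 6$, take $u=v_3$ and $w=v_4$.
\item For $P_3=v_1v_2v_3$, use the singletons $\{v_1\},\{v_2\},\{v_3\}$. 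Both $\{v_1,v_2\}$ and $\{v_2,v_3\}$ are TOIDS.
\item For $P_5$, this construction fails, because every edge meets a support vertex. Instead take $X=\{v_2,v_4\}$, $Y=\{v_3\}$, $Z=\{v_1,v_5\}$. No one of these is a TOIDS ($v_2$ has no neighbour in $X$, $Y$ is a single vertex, and $v_3$ has no neighbour in $Z$). Both $\{v_2,v_3,v_4\}$ and $\{v_1,v_2,v_4,v_5\}$ are TOIDS.
\end{itemize}

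\emph{The path $P_4$.} This is the main obstacle, because the general bound gives only $3$. Every total dominating set of $P_4=v_1v_2v_3v_4$ must contain both support vertices $v_2,v_3$. Suppose a partition with three sets exists. By Lemma \ref{l2} there is a set $X$ with two partners $Y,Z$, and both $X\cup Y$ and $X\cup Z$ contain $\{v_2,v_3\}$. Since $Y\cap Z=\emptyset$, each of $v_2,v_3$ must lie in $X$. But $\{v_2,v_3\}$ is already a TOIDS, so $X$ would be a TOIDS, a contradiction. The partition $\{\{v_2\},\{v_1,v_3,v_4\}\}$ gives the value $2$: $v_1$ has no neighbour in the second set, so neither part is a TOIDS, while their union $V$ is.
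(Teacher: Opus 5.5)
Your proof is correct, and its skeleton is the paper's: upper bounds come from the general results of Section~3 combined with Lemma~\ref{l2}, and lower bounds come from explicit partitions. Your partitions for $K_n$, $K_{r,s}$ and $P_3$ are the same as the paper's.

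The differences are local:
\begin{itemize}
\item For $K_{r,s}$, the paper obtains $s+1$ from Proposition~\ref{P2} together with the cited value $\gamma_t^{oi}(K_{r,s})=r+1$. This route also shows that Proposition~\ref{P2} is sharp. Your uniform bound $C_t^{oi}(G)\le\Delta(G)+1$ (Theorem~\ref{t1} plus Lemma~\ref{l2}) needs no citation. It is also what the paper ends up using for paths and cycles, after an unnecessary detour through Corollary~\ref{C2}.
\item For $P_n$ with $n\ge5$ and $C_n$ with $n\ge4$, the paper uses the non-adjacent singletons $\{v_1\},\{v_3\}$ and the rest of the vertices; the large set fails total domination at $v_2$. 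This single construction works for every $n\ge5$. Your adjacent-pair template, where the large set instead fails outer-independence, breaks at $n=5$ and forces your separate partition there.
\item For $P_4$, the paper only asserts that one ``can readily verify'' $C_t^{oi}(P_4)\neq3$. You actually prove it: both support vertices must lie in the centre $X$ of the star/triangle coalition graph, and $\{v_2,v_3\}$ is already a TOIDS. On this point your write-up is more complete than the paper's.
\end{itemize}
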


\begin{proof} $(i)$  Let $n$ be an integer. First, consider the partition  $\Psi=\Bigl\{\{v_i\},\{v_j\},V(K_n) \setminus\{v_i, v_j\}\Bigr\}$ of $V(K_n)$. Since any vertex subset with cardinality $n-1$ of the complete graph  
is a TOIDS, we deduce that $\Psi$ is a TOI-coalition partition and hence $C^{oi}_t(K_n)\ge 3$. Finally, let $\Psi=\{X_1,\ldots,X_k\}$ be any TOI-coalition partition in $K_n$. Without loss of generality,
we may assume that $X_1,X_2$ form a TOI-coalition.  By taking into account the fact that $\alpha(K_n)=1$ and Lemma~\ref{l1}, we deduce that
$|X_1|+|X_2|\ge n-1$ and hence $|X_3 \cup \ldots \cup X_k|\le 1$, implying that $k\le 3$. Therefore, $C^{oi}_t(K_n)\leq 3$. This together with the lower bound leads to $C^{oi}_t(K_n)=3$. Note that the upper bound given in Proposition \ref{prop:upperalpha} is tight for $K_{n}$.

$(ii)$ Let $K_{r,s}$ be a complete bipartite graph with the partite sets $A$ and $B$ of cardinality 
$|A|=r$, $|B|= s$ and $s\geq r$. It is known that $\gamma_t^{oi}(K_{r,s})=\min\{r,s\}+1=r+1$ \cite{A17}. 
Therefore, by Proposition \ref{P2}, we have $C_t^{oi}(K_{r,s})\le (r+s)-(r+1)+2=s+1$. We show next that 
$C_t^{oi}(K_{r,s})\ge s+1$. Consider the partition $\Psi=\Bigl\{A,\{b_1\},\{b_2\},\dots,\{b_s\}\Bigr\}$, 
where $B=\{b_1,b_2,\dots,b_s\}$. The set $A$ cannot be a TOIDS as it cannot be totally dominated. Moreover, 
no singleton $\{b_i\}$ is a TOIDS. One can observe that, for each $i\in\{1,\dots,s\}$,  \( A \cup \{b_i\} \)
intersects both parts and contains the entire \( A \), and the set \( B \setminus \{b_i\} \) is independent. 
Thus $A\cup\{b_i\}$ is a TOIDS. Therefore, every singleton $\{b_i\}$ forms a TOI-coalition with $A$, and so 
$\Psi$ is a TOI-coalition partition. Hence, $C_t^{oi}(K_{r,s})\ge s+1$. By taking the upper bound and lower 
bound on $C_t^{oi}(K_{r,s})$ into account, we get $C_t^{oi}(K_{r,s})=s+1$. Note that the upper bound given 
in Proposition \ref{P2} is sharp for $K_{r,s}$.

$(iii)$ Let $\Psi$ be a TOI-coalition partition of $P_n$, and let $X \in \Psi$. If $|X| < |V(P_n)|-
\alpha(P_n)=\left\lfloor\frac{n}{2}\right\rfloor$, then Corollary~\ref{C2} yields $C_{\Psi}(X)\le 
\min\left\{ \Delta(P_n), \left\lfloor \frac{n-|X|}{\,\lfloor n/2\rfloor-|X|\,} \right\rfloor \right\}$. 
Note that $\Delta(P_n)=2$ and, on the other hand, $2\left\lfloor\frac{n}{2}\right\rfloor -n \le 0  
\le |X|, \ $ which implies that
$$ 2\left\lfloor\frac{n}{2}\right\rfloor  - 2|X|  \le n-|X|  \Longrightarrow 
	2   \le \frac{n-|X|}{\left\lfloor\frac{n}{2}\right\rfloor  - |X|}  $$
And hence, $C_{\Psi}(X)\le 2$.
Note that if $|X|\ge \left\lfloor\frac{n}{2}\right\rfloor$, then Corollary \ref{C2} is not valid. In 
such a situation, by Theorem \ref{t1}, we have $C_{\Psi}(X)\le 2$. By Lemma \ref{l2}, there exists a 
set $X \in \Psi$ such that $C_t^{oi}(P_n)=C_{\Psi}(X)+1$. Hence, $C_t^{oi}(P_n)\le 3$. We proceed 
further by proving that $C_t^{oi}(P_n)\ge 3$. We now let $n=3$.  In such a case, a partition consisting 
of three singletons is a desired partition. So, we have $C_t^{oi}(P_3)=3$. Further, we let
$n=4$. The partition $\Bigl\{\{v_1,v_2\},\{v_3,v_4\}\Bigr\}$ is a TOI-coalition for $P_4$. We can readily 
verify that $C_t^{oi}(P_4)\ne3$. Therefore $C_t^{oi}(P_4)=2$. Finally, let $n\ge 5$. Consider the 
partition $\Psi=\Bigl\{A=\{v_1\},B=\{v_3\},C=V(P_n)\setminus\{v_1,v_3\}\Bigr\}$. None of these sets is 
a TOIDS, while $A\cup C$ and $B\cup C$ are TOIDS. Therefore, $\{A,B,C\}$ is a TOI-coalition partition, 
and so $C_t^{oi}(P_n)\ge 3$. This, together with the upper bound, yields that $C_t^{oi}(P_n)=3$.

$(iv$) Consider the cycle $C_n$, where $n\ge 3$. Recall that $\alpha(C_n)=\left\lfloor\frac{n}{2}\right\rfloor$ and $\Delta(C_n)=2$. Using a similar idea as in the proof of Part $(iii)$, we deduce that $C_{\Psi}(X)\le 2$. Accordingly, by Lemma \ref{l2}, we have $C_t^{oi}(C_n)\le 3$. Now let $n=3$. In this case, the partition consists of three singletons and is a TOI-coalition partition. Hence, $C_t^{oi}(C_3)=3$. Next, assume that $n\ge 4$. Consider the partition $\Bigl\{V_1=\{v_1\}, V_2=\{v_3\}, V_3=V(C_n)\setminus\{v_1,v_3\}\Bigr\}$. None of the sets is a TOIDS, while $V_1\cup V_3$ and $V_2\cup V_3$ are TOIDS. In such a case, $\{V_1,V_2,V_3\}$ is a TOI-coalition partition. Hence, $C_t^{oi}(C_n)\ge 3$. This, together with the upper bound imply that $C_t^{oi}(C_n)=3$. 
\end{proof}

\section{Complexity}
The purpose of this section is to study the complexity of the following decision problem 
associated with total outer-inde\-pen\-dent coalition number of a graph.

\medskip
\decprob{ {TOTAL OUTER-INDEPENDENT COALITION PARTITION} \\ \mbox{} \hfill (TOIC-partition, for short)}
{A graph $G$, an integer $r$.}{Is $C_t^{oi}(G) \geq r$?}{0.75}
\bigskip

The TOIC-problem can be regarded as a vertex partition problem subject to both positive 
and negative constraints simultaneously. More precisely, it demands that the union of at 
least one pair of parts forms a TOID set, while no individual part is allowed to be. 
The combination of these conditions makes the problem at least as hard as its underlying 
subproblems; in particular, since deciding whether a graph admits a TOIDS of estimated size 
is already NP-hard, the additional partitioning constraints can only preserve or further 
increase the overall computational difficulty.

Nevertheless, we show that the problem becomes tractable when the input graph belongs to 
certain well-structured graph classes. Our approach relies on the framework developed by 
Courcelle et al.~\cite{courcelle}.

We begin by recalling the notion of a $k$-expression of a graph $G$. Given a graph $G = (V, E)$ 
and a set of labels $[k]$, a $k$-expression is a recursive construction of $G$ using the 
following four operations: $\eta_i(v):$ Introduces a new graph consisting of a single vertex 
$v$ labeled as $i \in [t]$; $G_1 \oplus G_2:$ Makes the disjoint union of two labeled graphs 
$G_1$ and $G_2$, without adding any edge between them; $\rho_{i,j}(G)$, with $i \neq j$: Adds 
an edge between every vertex of $G$ labeled $i$ and every vertex labeled $j$; and 
$\chi_{i \to j}(G)$, with $i \neq j$: Relabels every vertex with a label $i$ by assigning 
it label $j$ instead.
The clique-width of $G$, written $\text{cw}(G)$, is defined as the smallest integer $k$ for 
which $G$ admits a $k$-expression.

As an illustration, we give a $4$-expression for the house graph $H$ with vertex set 
$\{v_1, v_2, v_3, v_4, v_5\}$ and edge set $\{v_1v_2,\, v_1v_3,\, v_2v_4,\, v_3v_4,\, 
v_3v_5,\, v_4v_5\}$. The construction proceeds as follows.

\begin{enumerate}
    \item Disjoint union of $\{v_i\}$ with labels $i,$ for $i=1,2$:
    $G_1 = \eta_1(v_1) \oplus \eta_2(v_2).$
    \item Add the edge $v_1v_2$ by connecting labels $1$ and $2$, then 
    relabel $v_1$ with label $3$ to distinguish it from future vertices:
    $G_2 = \chi_{1 \to 3}\bigl(\rho_{1,2}(G_1)\bigr).$
    \item Introduce $v_3$ with label $1$ and add it to the current graph:
    $G_3 = G_2 \oplus \eta_1(v_3).$
    \item Add the edge $v_1v_3$, then relabel $v_1$ with label $4$:
    $G_4 = \chi_{3 \to 4}\bigl(\rho_{3,1}(G_3)\bigr).$
    \item Introduce $v_4$ with label $3$ and add it to the current graph:
    $G_5 = G_4 \oplus \eta_3(v_4).$
    \item Add edges $v_2v_4$ and $v_3v_4$ and relabel $v_4$ into $1$ so that 
    it shares a label with $v_3$:
    $$G_6 = \chi_{3 \to 1}\bigl(\rho_{1,3}\bigl(\rho_{2,3}(G_5)\bigr)\bigr).$$
    \item Introduce $v_5$ with label $3$ and add it to the current graph:
    $G_7 = G_6 \oplus \eta_3(v_5).$
    \item Add $v_3v_5$ and $v_4v_5$ simultaneously by connecting labels $1$ and $3$:
    $G_8 = \rho_{1,3}(G_7).$
\end{enumerate}

Since the resulting graph $G_8$ is precisely $H$, and the construction uses 
at most $4$ labels, so $\mathrm{cw}(H) \leq 4$.

The theorem below is a fundamental tool that makes it possible to solve a wide range of 
graph problems efficiently on graph classes of bounded clique-width. We point out that the 
result applies, in particular, to graphs of tree-width $t$: a tree-decomposition of width 
at most $t' = 2t+1$ can be obtained in linear time~\cite{K21}, and from it a 
$\left(3 \cdot 2^{t'-1}\right)$-expression can be constructed~\cite{CR05}.

\begin{them}\label{th:courcelle}{\emph{(Courcelle et al.~\cite{courcelle})}}
Let $\mathcal{C}$ be a class of graphs with clique-width bounded by a fixed constant $k$, 
that is, $\text{cw}(G) \le k$ for every $G \in \mathcal{C}$. Let $\Phi$ be a fixed sentence 
of Monadic Second-Order Logic $(\mathrm{MSOL}_1)$. If a $k$-expression of the input graph $G$ 
is provided together with the input, then the problem of deciding whether $G$ satisfies 
$\Phi$ is solvable in linear time on $k$ and the order.
\end{them}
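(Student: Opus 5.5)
Since this statement is Courcelle et al.'s theorem, quoted from \cite{courcelle}, in the paper itself it is used as a black box. If I had to justify it, I would use the classical automata-theoretic route: reduce model checking of an $\mathrm{MSOL}_1$ sentence on $G$ to running a finite bottom-up tree automaton on the given $k$-expression. The $k$-expression of $G$ is a rooted term over the finite alphabet $\Sigma_k=\{\eta_i,\oplus,\rho_{i,j},\chi_{i\to j} : i,j\in[k]\}$. Its leaves are in bijection with $V(G)$, and its size can be taken linear in $|V(G)|$ after discarding redundant operations. The plan is to build an $\mathrm{MSO}$ sentence $\Phi^{*}$ over such labelled trees that holds in the expression tree $T$ if and only if $G\models\Phi$, and then apply the Thatcher--Wright/Doner theorem: $\mathrm{MSO}$-definable sets of finite labelled trees are exactly those recognised by finite tree automata.

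\textbf{Translating $\Phi$ into $\Phi^{*}$.} Vertex variables become variables ranging over leaves of $T$, and vertex-set variables become set variables constrained to contain only leaves. Boolean connectives and quantifiers are translated verbatim. The only nontrivial atom is the adjacency predicate $\mathrm{adj}(x,y)$. To define it, I would first express the current label of a leaf $x$ at an ancestor node $t$. This uses $k$ existentially quantified sets $L_1,\dots,L_k$ of nodes on the path from $x$ to the root. They must satisfy local conditions: the label starts as $i$ at a leaf $\eta_i$, changes only at $\chi$-nodes according to the rule $i\to j$, and is otherwise inherited by the parent. These conditions are first-order checks on parent--child pairs, so the whole predicate is $\mathrm{MSO}$. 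Then $\mathrm{adj}(x,y)$ holds if and only if there is a node $t$, labelled $\rho_{i,j}$ and a common ancestor of $x$ and $y$, at which (below the operation) $x$ carries label $i$ and $y$ carries label $j$, or vice versa. This works because edges are only ever created by $\rho$-operations and are never removed.

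\textbf{Evaluating $\Phi^{*}$.} Since $\Phi$ and $k$ are fixed, $\Phi^{*}$ is a fixed $\mathrm{MSO}$ sentence over the finite alphabet $\Sigma_k$. I would compile it into a deterministic bottom-up tree automaton $\mathcal A$ by the standard induction on formulas: atomic formulas give small automata, Boolean operations are handled by product construction and complementation of deterministic automata, and existential set quantification is handled by projection followed by determinisation. The size of $\mathcal A$ depends only on $\Phi$ and $k$. It can be nonelementary, but it is a constant for the fixed class. Running $\mathcal A$ bottom-up on $T$ takes one table lookup per node, hence time $O(|T|)=O(n)$ with the constant depending on $k$ and $\Phi$. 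This gives the linear-time claim.

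\textbf{Main obstacle.} The main step is the faithful $\mathrm{MSO}$-definition of adjacency through relabelings, i.e.\ keeping track of each leaf's label history along root paths with only $k$ set variables. The tree-automaton correspondence and determinisation are standard and I would simply cite them. For adjacency, I would first try the label-propagation encoding described above and check that it is unambiguous. If that proves delicate, the fallback is to avoid it altogether: define adjacency directly by an automaton whose states record, for the two marked leaves, their current labels and whether they are already adjacent. This needs only $k^{2}\cdot 2$ states per subtree, and the automaton can then be composed with the one for the rest of the formula.
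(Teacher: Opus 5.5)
The paper does not prove this statement: it imports it from Courcelle, Makowsky and Rotics and uses it only as a black box for Theorem~\ref{thm:CW}. What you supply is the standard argument behind that citation, and it is correct. The argument is that $G$ is $\mathrm{MSO}$-interpretable in the labelled tree $T$ of its $k$-expression, so $\Phi$ pulls back to a fixed sentence $\Phi^{*}$ over $\Sigma_k$-trees, which a deterministic bottom-up automaton decides in one pass. Your label-propagation definition of adjacency is unambiguous once the $L_c$ are required to partition the path, since the leaf rule and the parent--child rules then determine them by induction. Your fallback automaton for $\mathrm{adj}(x,y)$ works equally well as an atomic case of the formula-to-automaton induction.

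Compared with the bare citation, your route makes explicit three points that the statement's wording blurs.

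First, the running time is $O(f(k,\Phi)\cdot|T|)$ with $f$ nonelementary in general. So ``linear time on $k$'' can only mean linear in the order for fixed $k$ and $\Phi$.

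Second, passing from $|T|$ to $n$ needs the normalisation you merely assert. A maximal chain of unary operations acts as ``join the label pairs in some set $S$, then relabel by a map $h\colon[k]\to[k]$''. This holds because a $\rho_{i,j}$ above a relabelling $h$ equals the operations $\rho_{a,b}$ with $a\in h^{-1}(i)$, $b\in h^{-1}(j)$ placed below it. So the chain can be replaced by at most $\binom{k}{2}$ operations $\rho$ followed by a shortest $\chi$-word realising $h$. That word has length bounded in terms of $k$, because there are only $k^{k}$ such maps. With $n$ leaves and $n-1$ nodes $\oplus$, this gives $|T|=O(g(k)\,n)$, computable in time linear in the size of the input expression.

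Third, your translation handles only vertex and vertex-set quantifiers, which is precisely why the theorem is stated for $\mathrm{MSOL}_1$. Edges of $G$ are not nodes of $T$, so edge-set quantification has no counterpart.
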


Our goal is to formulate the decision version of the TOIC-partition problem within 
Monadic Second-Order Logic over graphs, known as $\mathrm{MSOL}_1$. This logic 
extends first-order logic by allowing quantification not only over individual 
vertices, but also over sets of vertices. This level of expressiveness is sufficient 
to describe a wide range of graph properties, including domination conditions and 
partition constraints, while remaining compatible with efficient algorithmic results 
on graph classes of bounded clique-width, such as Theorem~\ref{th:courcelle}.

To this end, we consider the relational structure $\langle G, \mathcal{R} \rangle$, where 
two vertices satisfy $\mathcal{R}(u, v)$ precisely when $uv$ is an edge in $E(G)$. 
We aim to express the TOIC-partition problem as a closed $\mathrm{MSOL}_1$ 
sentence $\Phi$ over this structure, so that $G$ satisfies $\Phi$ if and only if 
$C_t^{oi}(G) \geq r$.

\begin{theorem}\label{thm:CW}
The TOIC-partition problem can be solved in linear $O(f(k)\cdot n)$ time when restricted 
to graphs with clique-width at most $k$, if a $k$-expression is provided as part of 
the input.
\end{theorem}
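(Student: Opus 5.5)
The plan is to write, for each fixed value of $r$, a closed $\mathrm{MSOL}_1$ sentence $\Phi_r$ over $\langle G,\mathcal{R}\rangle$ such that $G\models\Phi_r$ if and only if $C_t^{oi}(G)\ge r$. Theorem~\ref{th:courcelle} then gives an $O(f(k)\cdot n)$ algorithm. The length of $\Phi_r$ depends on $r$, so the hidden function will be $f(k,r)$; the argument below therefore treats $r$ as a fixed parameter of the problem.

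\textbf{Step 1 (basic predicates).} Both conditions defining a TOIDS are first-order in a free set variable $S$:
$$\mathrm{TD}(S):=\forall v\,\exists u\,\bigl(u\in S\wedge \mathcal{R}(u,v)\bigr),\qquad \mathrm{OI}(S):=\forall u\,\forall v\,\bigl(\mathcal{R}(u,v)\rightarrow (u\in S\vee v\in S)\bigr),$$
and we set $\mathrm{TOIDS}(S):=\mathrm{TD}(S)\wedge \mathrm{OI}(S)$. Unions of two set variables are expressed inline by replacing $u\in S$ with $u\in X\vee u\in Y$. This gives
$$\mathrm{Coal}(X,Y):=\neg\mathrm{TOIDS}(X)\wedge\neg\mathrm{TOIDS}(Y)\wedge\mathrm{TOIDS}(X\cup Y).$$

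\textbf{Step 2 (reduce ``at least $r$'' to ``exactly $r$'').} Quantifying over partitions of arbitrary size is not possible in $\mathrm{MSOL}_1$, so we show that if some TOI-coalition partition has at least $r$ parts, then one has exactly $r$ parts. The case $r\le 2$ is trivial: $C_t^{oi}(G)\ge 2$ precisely when $G$ has no isolated vertices, by Proposition~\ref{Existence} and its corollary. For $r\ge 3$, take $\Psi$ of size $m>r$.
\begin{itemize}
\item If $m>3$, then by Lemma~\ref{l2} the TOI-coalition graph of $\Psi$ is a star with center $X$ and leaves $Y_1,\dots,Y_{m-1}$.
\item Merge two leaves $Y_i,Y_j$ with $i,j\ne 1$.
\item The merged set is not a TOIDS by Observation~\ref{ob1}, because it is disjoint from the TOIDS $X\cup Y_1$.
\item $X$ still forms a TOI-coalition with the merged set, because any superset of a TOIDS is a TOIDS: total domination is preserved and the complement shrinks, so it stays independent.
\item All other coalitions with $X$ are untouched.
\end{itemize}
Iterating reduces $\Psi$ to exactly $r$ parts.

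\textbf{Step 3 (the sentence).} With Step 2 in hand, take
$$\Phi_r:=\exists V_1\cdots\exists V_r\Bigl(\mathrm{Part}(V_1,\dots,V_r)\wedge\bigwedge_{i=1}^{r}\bigvee_{j\ne i}\mathrm{Coal}(V_i,V_j)\Bigr).$$
Here $\mathrm{Part}$ says the sets are nonempty, pairwise disjoint and cover $V(G)$; it is a finite conjunction of first-order conditions. The length of $\Phi_r$ depends only on $r$, and Theorem~\ref{th:courcelle} finishes the proof.

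\textbf{Main obstacle.} The delicate point is Step 2 together with the role of $r$. The monotonicity argument must be checked carefully in the triangle case of Lemma~\ref{l2}, where $m=3$ and there is nothing left to reduce. If the statement is meant to allow $r$ as an unrestricted part of the input, I would first try to avoid $r$ set variables by exploiting the star structure. One would quantify only over the center $X$, and then decide by a separate counting/dynamic-programming argument along the $k$-expression how many disjoint sets $Y\subseteq V\setminus X$ satisfy $\mathrm{Coal}(X,Y)$ and cover $V\setminus X$. Counting is not pure $\mathrm{MSOL}_1$, so I expect this to require an optimization extension of Courcelle's theorem (LinEMSOL style) rather than Theorem~\ref{th:courcelle} itself. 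Failing that, I would state the result with $f$ depending on both $k$ and $r$.
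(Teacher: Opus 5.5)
Your proposal is correct and follows the paper's route: an $\mathrm{MSOL}_1$ sentence with $r$ set variables, encoding a partition into non-TOIDS parts in which each part has a coalition partner, followed by Theorem~\ref{th:courcelle}. Your Step 2 and your explicit $f(k,r)$ dependence supply care the paper omits, because the paper's sentence only asserts a TOI-coalition partition of order exactly $r$, and that is not equivalent to $C_t^{oi}(G)\ge r$ for $r\le 2$ (e.g.\ $K_3$ has no two-part TOI-coalition partition although $C_t^{oi}(K_3)=3$). Your worry about the triangle case is moot, since $m>r\ge 3$ forces $m\ge 4$; just cite the star/triangle argument inside the proof of Lemma~\ref{l2}, which does not use maximality, rather than the lemma's statement.
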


\begin{proof}
    Let $G=(V,E)$ be a graph and let $r$ be a positive integer. To achieve our
aim, we use some auxiliary predicates defined as follows:
$$ N[u,v] \equiv \Big( \big( u=v \big) \lor \mathcal{R}(u,v) \Big)$$
where $N[u,v]$ holds if and only if $u$ belongs to the closed neighborhood 
of $v$ in $G$, that is, $u = v$ or $uv \in E(G)$. Also, 
$$ TDom(V_j) \equiv \forall v \bigg( \exists u 
    \Big( V_j(u) \land \mathcal{R}(u,v) \Big)\bigg) $$
which holds it and only if $V_j$ is a total dominating set in $G$.
And finally,
$$OInd(V_j) \equiv \forall u\, \forall v \bigg( \Big( \neg V_j(u) \land 
\neg V_j(v) \Big) \Rightarrow \neg \mathcal{R}(u,v) \bigg)$$
which captures the outer-independent condition for a vertex subset $V_j$.

We now let the formula $\Phi$ be defined as
\[ \Phi_{TOICP} : \exists \{V_1,\ldots,V_r\} \, \big( \Phi_P(V_1,\ldots,V_r) 
\wedge \Phi_{NTOID}(V_1,\ldots,V_r) \wedge \Phi_{TOIC}(V_1,\ldots,V_r) \big),
\]
where $\Phi_P$, $\Phi_{NLD}$ and $\Phi_{LDC}$ are subformulas that guarantee the 
sets $V_j$ constitute a TOIC-partition. 

The subformula $\Phi_P$ leads with the requirement that $V_1, \ldots, V_r$ form 
a partition of $V(G)$. It is given by
\[
\Phi_P(V_1,\ldots,V_r) \equiv \left( \bigwedge_{i=1}^r \bigwedge_{j=i+1}^r \neg\ \Big( \exists v 
\big( V_i(v) \land V_j(v) \big) \Big) \right) \land \left( \forall v 
\bigvee_{i=1}^r V_i(v) \right)
\]
The left part states that no vertex can belong to two distinct parts 
simultaneously, which guarantees that the sets are mutually disjoint. The 
right conjunct states that every vertex of $G$ must appear in at least one 
part, thus ensuring that the union of all parts covers $V(G)$ entirely.

The second subformula, $\Phi_{NTOID}$, will permit us to assure that no subset 
$V_j$ is a TOID-set. 
\[ \Phi_{NTOID}(V_1,\ldots,V_r) \equiv \bigwedge_{j=1}^r \neg \bigg( TDom(V_j) \land 
        OInd(V_j) \bigg), \]
which is true if and only if no subset $V_j$ is a TOID-set.

The third subformula, $\Phi_{TOIC}$, reflects the TOI coalition condition: for every 
part $V_j$ in the partition, there must exist some other part $V_i$ such that 
their union $V_i \cup V_j$ forms a TOIDS of $G$. This is expressed as
\[
\Phi_{TOIC}(V_1,\ldots,V_r) \equiv \bigwedge_{j=1}^{r} \bigvee_{\substack{i=1 \\ i \neq j}}^{r} 
\Big( TDom(V_i \cup V_j) \land OInd(V_i \cup V_j) \Big)
\]

At this point, the TOIC-partition problem has been fully encoded as a closed 
$\mathrm{MSOL}_1$ sentence 
$$\Phi_{TOICP} \equiv \exists V_1,\ldots,V_r \ \bigg(\Phi_P(V_1,\ldots,V_r) \land 
\Phi_{NTOID}(V_1,\ldots,V_r) \land \Phi_{TOIC}(V_1,\ldots,V_r) \bigg)$$ 
over the relational structure $\langle G, \mathcal{R} \rangle$. 
Hence, we can derive that the result holds, as an application of 
Theorem~\ref{th:courcelle}.
\end{proof}

\section{Concluding remarks}

We conclude by posing a few possible problems for future research.

\begin{enumerate}
    \item Establish upper and lower bounds for $C_t^{oi}(G)$ in terms of its maximum degree $\Delta(G)$ and minimum degree $\delta(G)$.

 \item Characterize isolate-free graphs $G$
 satisfying $C_t^{oi}(G)=n$.

\item Study the total outer-independent coalition number of trees.

\end{enumerate}

\noindent{\bf Acknowledgement.} 
The work of Hamidreza Golmohammadi was supported by the Mathematical Center in Akademgorodok 
under the agreement No. 075-15-2025-348 with the Ministry of Science and Higher Education of the Russian Federation.
The work of Juan Carlos Valenzuela-Tripodoro was partially supported by the Universidad de Cadiz under grant 2025-084/PU/PP-EST-INVEST-UCA/MV-EST 2025/203.

\end{document}